\DeclareMathOperator{\tr}{tr} 
\newcommand{\R}{{\mathbb R}}
\newcommand{\cX}{{\sf X}}
\newcommand{\cY}{{\sf Y}}
\newcommand{\cZ}{{\sf 0}}
\newcommand{\cS}{{\sf S}}
\newcommand{\cP}{{\sf P}}
\newcommand{\al}{{\alpha}}
\newcommand{\be}{{\beta}}
\newtheorem{theorem}{Theorem}
\begin{document}

\title{The center problem for the Lotka reactions \\ with generalized mass-action kinetics}

\author[ricam]{Bal\'azs Boros}
\ead{balazs.boros@ricam.oeaw.ac.at}

\author[univie]{Josef Hofbauer}
\ead{josef.hofbauer@univie.ac.at}

\author[jku]{Georg Regensburger}
\ead{georg.regensburger@jku.at}

\author[ricam]{Stefan M\"uller\corref{cor}}
\ead{stefan.mueller@ricam.oeaw.ac.at}
\cortext[cor]{Corresponding author}

\address[ricam]{Radon Institute for Computational and Applied Mathematics, \\ Austrian Academy of Sciences, Linz, Austria}

\address[univie]{Department of Mathematics, University of Vienna, Austria}

\address[jku]{Institute for Algebra, Johannes Kepler University Linz, Austria}

\begin{abstract}
Chemical reaction networks with generalized mass-action kinetics lead to power-law dynamical systems.
As a simple example, we consider the Lotka reactions 
and the resulting planar ODE.
We characterize the parameters (positive coefficients and real exponents)
for which the unique positive equilibrium is a center.
\end{abstract}

\begin{keyword}
Chemical reaction network, power-law kinetics, center-focus problem, focal value, first integral, reversible system
\end{keyword}


\maketitle


\section{Introduction}

Lotka~\cite{lotka:1910} considered a series of three chemical reactions,
transforming a substrate into a product via two intermediates, $\cX$~and~$\cY$.
If the reactions producing $\cX$~and~$\cY$, respectively, are assumed to be autocatalytic,
then the resulting ODE 
is the classical Lotka-Volterra predator-prey system~\cite{lotka:1920:a,lotka:1920:b}.

Farkas and Noszticzius \cite{farkas:noszticzius:1985} and Dancs\'o et al.~\cite{dancso:farkas:farkas:szabo:1991}
considered generalized Lotka-Volterra schemes, 
arising from the Lotka reactions with power-law kinetics. 
They studied the ODE
\begin{align} \label{ode_intro}
\dot x &= k_1 \, x^{\hat p} - k_2 \, x^{p} y^{q} , \\
\dot y &= k_3 \, x^{p} y^{q} - k_4 \, y^{\hat q} \nonumber
\end{align}
with positive coefficients $k_1,k_2,k_3,k_4>0$ and real exponents $p,q,\hat p,\hat q\ge1$.
(The special case $p=q=\hat p=\hat q=1$ is the classical Lotka-Volterra system.)
Dancs\'o et al.~\cite{dancso:farkas:farkas:szabo:1991} provided a local stability and bifurcation analysis.
In particular, by finding first integrals,
they determined four cases where the ODE admits a center.

In this work,
we allow arbitrary real exponents $p,q,\hat p,\hat q\in\R$ in the ODE~\eqref{ode_intro}.
In addition to the four known cases,
we identify two new cases of centers,
by showing that they correspond to reversible systems.
Moreover,
we prove that centers are characterized by these six cases.

The paper is organized as follows.
In Section 2,
we elaborate on the chemical motivation of the ODE under study,
and in Section 3, we present our main result.

\section{The Lotka reactions with generalized mass-action kinetics}
As in the original work by Lotka \cite{lotka:1910},
we start by considering a series of net reactions, $\cS \to \cX$, $\cX \to \cY$, and $\cY \to \cP$,
which transform a substrate into a product.
We are interested in the dynamics of $\cX$~and~$\cY$ only,
in particular, we assume that the substrate is present in constant amount and that the product does not affect the dynamics.
As a consequence, we omit substrate and product from consideration and arrive at the simplified reactions
\[
\cZ \to \cX, \, \cX \to \cY, \, \cY \to \cZ .
\]
To obtain a classical Lotka-Volterra system as in~\cite{lotka:1920:a,lotka:1920:b},
one assumes the first and the second reaction to be autocatalytic,
in particular, one defines the kinetics of the reactions as
$v_{\cZ\to\cX} = k_{\cZ\to\cX} [\cX]$, $v_{\cX\to\cY} = k_{\cX\to\cY} [\cX][\cY]$, and $v_{\cY\to\cZ} = k_{\cY\to\cZ} [\cY]$
with rate constants $k_{\cZ\to\cX}, k_{\cX\to\cY}, k_{\cY\to\cZ} > 0$ and concentrations $[\cX],[\cY]\ge0$.
In this work, we consider the Lotka reactions with arbitrary power-law kinetics.
In terms of chemical reaction network theory,
we assume generalized mass-action kinetics~\cite{mueller:regensburger:2012,mueller:regensburger:2014},
that is,
\[
v_{\cZ \to \cX} = k_{\cZ \to \cX} [\cX]^{\al_1} [\cY]^{\be_1}, \quad
v_{\cX \to \cY} = k_{\cX \to \cY} [\cX]^{\al_2} [\cY]^{\be_2}, \quad
v_{\cY \to \cZ} = k_{\cY \to \cZ} [\cX]^{\al_3} [\cY]^{\be_3} ,
\]
with arbitrary real exponents $\al_1, \be_1, \al_2, \be_2, \al_3, \be_3 \in \R$.
The resulting ODE for the concentrations $x=[\cX]$ and $y=[\cY]$ amounts to
\begin{align} \label{ode_math}
\dot x &= k_1 \, x^{\al_1} y^{\be_1} - k_2 \, x^{\al_2} y^{\be_2} , \\
\dot y &= k_3 \, x^{\al_2} y^{\be_2} - k_4 \, x^{\al_3} y^{\be_3} , \nonumber
\end{align}
where $k_1=k_{\cZ\to \cX}$, $k_2=k_3=k_{\cX \to\cY}$, and $k_4=k_{\cY\to\cZ}$.
Since we allow real exponents,
we consider the dynamics on the positive quadrant.
In fact,
we study an ODE which is orbitally equivalent to \eqref{ode_math} on the positive quadrant and has two exponents less,
\begin{align} \label{ode_trafo}
\dot x &= k_1 \, x^{a_1} y^{b_1} - k_2 , \\
\dot y &= k_3 - k_4 \, x^{a_3} y^{b_3} , \nonumber
\end{align}
where $a_1=\al_1-\al_2$, $b_1=\be_1-\be_2$, $a_3=\al_3-\al_2$, $b_3=\be_3-\be_2$.
Further, we assume that the ODE admits a positive equilibrium $(x^*,y^*)$
and use the equilibrium to scale the ODE~\eqref{ode_trafo}.
We introduce $K = \frac{k_3}{k_2} \, \frac{x^*}{y^*}>0$
and obtain 
\begin{align} \label{ode_K}
\dot x &= x^{a_1} y^{b_1} - 1 , \\
\dot y &= K \left( 1 - x^{a_3} y^{b_3} \right) . \nonumber
\end{align}
Clearly, the ODE~\eqref{ode_K} admits the equilibrium $(1,1)$ which is not necessarily unique,
and the Jacobian matrix at $(1,1)$ is given by
\begin{equation} \label{eq:J}
J =
\begin{pmatrix}
a_1 & b_1 \\
-Ka_3 & -Kb_3
\end{pmatrix} .
\end{equation}
Dancs{\'o} et al.~\cite{dancso:farkas:farkas:szabo:1991} studied the ODE~\eqref{ode_K}
in the orbitally equivalent form
\begin{align} \label{ode_dancso}
\dot x &= x^{\hat p} - x^{p} y^{q} , \\
\dot y &= C \left( x^{p} y^{q} - y^{\hat q} \right) , \nonumber
\end{align}
where $\hat p=a_1-a_3$, $\hat q=b_3-b_1$, $p=-a_3$, $q=-b_1$, and $C=K$.
They stated four cases where the equilibrium $(1,1)$ is a center and provided first integrals.
In this work, we identify two new cases and show that they correspond to reversible systems.
Moreover, we prove that every center belongs to one of the six cases.


\section{Main result}
\label{sec:main}

An equilibrium is a {\em center} if all nearby orbits are closed.

\begin{theorem}
The following statements are equivalent.

\begin{itemize}
\item[\rm 1.]
The equilibrium $(1,1)$ of the ODE~\eqref{ode_K} with $K>0$ is a center.
\item[\rm 2.]
The eigenvalues of the Jacobian matrix at $(1,1)$ are purely imaginary, that is, $\tr J=0$ and $\det J>0$,
and the first two focal values vanish.
\item[\rm 3.]
The parameter values $a_1,b_1,a_3,b_3 \in \R$, and $K>0$ belong to one of the six cases in Table~\ref{table}.
\end{itemize}
\end{theorem}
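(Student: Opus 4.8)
The plan is to prove the equivalence $1 \Leftrightarrow 2 \Leftrightarrow 3$ by a standard center-focus analysis, treating the three statements in a cycle. The implication $1 \Rightarrow 2$ is essentially automatic: if $(1,1)$ is a center, then all nearby orbits are closed, so the equilibrium cannot be a hyperbolic node, saddle, or focus. This forces the eigenvalues of $J$ to be purely imaginary, i.e.\ $\tr J = a_1 - Kb_3 = 0$ and $\det J = K(b_1 a_3 - a_1 b_3) > 0$; moreover, since a center admits no spiraling, every focal value (Lyapunov quantity) must vanish, in particular the first two. The real content lies in the reverse directions.

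\medskip

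For the direction $2 \Rightarrow 3$, I would first impose $\tr J = 0$ and $\det J > 0$ to pass to coordinates in which the linear part is a rotation, writing $x = 1 + u$, $y = 1 + v$ and expanding the right-hand sides of~\eqref{ode_K} as power series in $u, v$ (the exponents $a_1, b_1, a_3, b_3$ need not be integers, but the vector field is real-analytic on the positive quadrant, so this is legitimate near the equilibrium). I would then compute the first two focal values $L_1$ and $L_2$ as explicit polynomial (or rational) functions of the parameters $a_1, b_1, a_3, b_3, K$, using the trace condition to eliminate one parameter. The key algebraic step is to solve the system $L_1 = 0$, $L_2 = 0$ subject to $\tr J = 0$ and $\det J > 0$, and to verify that its solution set is exactly the union of the six cases listed in Table~\ref{table}. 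I expect this to be the main obstacle: the focal-value computation is lengthy, and factoring the resulting variety cleanly into six components — so as to confirm that no spurious solutions appear and none of the genuine cases are missed — is delicate. A computer algebra system would be essential here, and care must be taken that vanishing of only the first two focal values already pins down the parameters (rather than merely being necessary conditions).

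\medskip

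For the closing direction $3 \Rightarrow 1$, I would handle the six cases by exhibiting, for each, a structural reason that $(1,1)$ is a center. Following the paper's own stated strategy, four of the cases should be dispatched by producing an explicit first integral $H(x,y)$ (matching those of Dancs\'o et al.), so that nearby orbits lie on the level curves $H = \text{const}$, which are closed around the equilibrium. For the two new cases, I would instead show that the system is \emph{reversible}: I would construct an involution $R$ of the phase plane (typically an analytic symmetry such as swapping or rescaling coordinates, possibly composed with reflection) that fixes a line through $(1,1)$ and under which the vector field is anti-invariant, so that orbits are symmetric about the fixed line. Combined with the purely imaginary linear part, reversibility forces the equilibrium to be a center rather than a focus. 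The technical care here is in writing down the correct involution for each new case and verifying the anti-symmetry of~\eqref{ode_K}, together with checking local properness of the first integrals so that their level sets are genuinely closed curves near $(1,1)$.
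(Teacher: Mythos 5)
Your overall architecture coincides with the paper's own proof: $2 \Rightarrow 3$ is done by computing the focal values $L_1,L_2$ (with computer algebra) under the constraints $\tr J=0$, $\det J>0$ and matching the solution variety with the six cases of Table~\ref{table}; and $3 \Rightarrow 1$ is done exactly as you propose, via the first integrals of Dancs\'o et al.\ for cases (i)--(iv) and via reversibility for (r1) and (r2) (the paper uses the swap $(x,y)\mapsto(y,x)$ directly for (r1), and for (r2) first applies $u=x^K$, $v=y^{-1}$ and rescales by the positive factor $K^{-1}v^{b_1}$ before the same swap-reversibility applies).

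However, your argument for $1 \Rightarrow 2$ has a genuine gap. You infer: a center cannot be a hyperbolic node, saddle, or focus, \emph{hence} the eigenvalues of $J$ are purely imaginary. That inference is false for general analytic planar systems, because a center can have a degenerate linear part: $\dot u = -v^3$, $\dot v = u^3$ has a center at the origin with $J=0$, and $\dot u = v$, $\dot v = -u^3$ has a nilpotent center. So excluding node, saddle, and focus still leaves open the possibility $\det J = 0$, and statement 2 (which asserts $\det J>0$) does not follow without a system-specific argument. The paper closes precisely this case in one line: the equilibria of the ODE~\eqref{ode_K} are the solutions of $x^{a_1}y^{b_1}=1$, $x^{a_3}y^{b_3}=1$, i.e., in logarithmic coordinates the kernel of the exponent matrix
\begin{equation*}
M=\begin{pmatrix} a_1 & b_1 \\ a_3 & b_3 \end{pmatrix},
\end{equation*}
and since $\det J = K(a_3b_1-a_1b_3) = -K\det M$, the condition $\det J=0$ makes $M$ singular, so $(1,1)$ lies on a whole curve (or plane) of equilibria, is not isolated, and hence cannot be a center. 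With this observation inserted, your step ``this forces the eigenvalues of $J$ to be purely imaginary'' becomes valid; without it, it is a non sequitur, and it is exactly the degenerate case that the rest of your proof never revisits.
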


\begin{proof}
1 $\Rightarrow$ 2: If $J$ has a zero eigenvalue, that is, $\det J=0$, then $(1,1)$ lies on a curve of equilibria
and cannot be a center.
Hence, 
the eigenvalues of $J$ are purely imaginary,
and all focal values vanish.

2 $\Rightarrow$ 3:
For the computation of the first two focal values, $L_1$ and $L_2$,
and the case distinction implied by $\tr J=0$, $\det J>0$, and $L_1=L_2=0$,
see Subsection~\ref{ss:cd}.

3 $\Rightarrow$ 1:
For the cases (i)--(iv) in Table~\ref{table},
first integrals have been given by Dancs\'o et al.~\cite{dancso:farkas:farkas:szabo:1991}.
In fact, they determined all the cases for which a first integral can be found by using an integrating factor of the form $x^Ay^B$.
See Table~\ref{table:fi} and \cite[p.~122, Table~I]{dancso:farkas:farkas:szabo:1991}.

Case (i) includes the classical Lotka-Volterra systems;
the corresponding first integral is of the type of separated variables and was already stated in Farkas and Noszticzius~\cite{farkas:noszticzius:1985}.
In case~(iv), there is a typo in \cite{dancso:farkas:farkas:szabo:1991}; the correct formula is $C=\tfrac{1}{\hat q - 1}$.

The remaining cases, (r1) and (r2), are reversible systems. See Subsection~\ref{ss:rs}.
\end{proof}

\renewcommand{\arraystretch}{2.5}

\begin{table} \label{table}
\begin{center}
\begin{tabular}{|c|c|c|c|c|}
\hline
case & \multicolumn{3}{c|}{parameters} & ODE \\[2ex]
\hline\hline
(i) & $a_1=b_3=0$
& $K>0$
& $a_3b_1>0$
& $\begin{aligned} \dot x &= y^{b_1}-1 \\ \dot y &= K(1-x^{a_3}) \end{aligned}$ \\[2ex] \hline
(ii) & $\begin{aligned} a_1&=a_3+1 \\ b_3&=b_1+1 \end{aligned}$
& $K=\textstyle \frac{a_1}{b_3}>0$
& $a_1+b_3<1$
& $\begin{aligned} \dot x &= x^{a_1}y^{b_3-1}-1 \\ \dot y &= \textstyle \frac{a_1}{b_3} (1-x^{a_1-1}y^{b_3}) \end{aligned}$ \\[2ex] \hline
(iii) & $\begin{aligned} a_3&=-1 \\ b_3&=1 \end{aligned}$
& $K=a_1>0$
& $a_1+b_1<0$
& $\begin{aligned} \dot x &= x^{a_1}y^{b_1}-1 \\ \dot y &= \textstyle a_1 (1-\textstyle \frac{y}{x}) \end{aligned}$ \\[2ex] \hline
(iv) & $\begin{aligned} a_1&=1 \\ b_1&=-1 \end{aligned}$
& $K=\textstyle \frac{1}{b_3}>0$
& $a_3+b_3<0$ 
& $\begin{aligned} \dot x &= \textstyle \frac{x}{y}-1 \\ \dot y &= \textstyle \frac{1}{b_3}(1-x^{a_3}y^{b_3}) \end{aligned}$ \\[2ex] \hline \hline
(r1) & $\begin{aligned} a_1&=b_3 \\ a_3&=b_1 \end{aligned}$
& $K=1$
& $|a_1|<|b_1|$
& $\begin{aligned} \dot x &= x^{a_1}y^{b_1}-1 \\ \dot y &= 1-x^{b_1}y^{a_1} \end{aligned}$ \\[2ex] \hline
(r2) & $\begin{aligned} a_1&=Kb_3 \\ a_3&=Kb_1 \end{aligned}$
& $K=\textstyle \frac{1}{b_3-b_1-1}>0$
& $|b_3|<|b_1|$
& $\begin{aligned} \dot x &= x^{K b_3}y^{b_1}-1 \\ \dot y &= K(1-x^{K b_1}y^{b_3}) \end{aligned}$ \\[2ex] \hline
\end{tabular}
\vspace{4ex}
\caption{Special cases of the ODE~\eqref{ode_K} with $K>0$ having a center.
The first four cases were already stated in Dancs\'o et al~\cite{dancso:farkas:farkas:szabo:1991},
where first integrals have been given.
The last two cases correspond to reversible systems.
}
\end{center}
\end{table}

\subsection{Case distinction} \label{ss:cd}

Using $\tr J=0$, that is, $a_1=K b_3$ by Equation~\eqref{eq:J},
we compute $\det J$ and the first two focal values, $L_1$ and $L_2$.
We find
\[
\det J =K (a_3 b_1 - b_3^2 K)
\]
and note that $\det J>0$ implies $a_3,b_1\neq0$.
Further,
using the Maple program in \cite{Kuznetsova:2012},
we find
\begin{equation*} \label{eq:l1}
L_1 = \frac{\pi}{8} \, \frac{K b_3 \left[ b_1 (1 + a_3 - a_3 K - b_3 K) - a_3 (1 - b_3) K \right]}{\sqrt{\det J} \, b_1} .
\end{equation*}
Expressions for $L_2$ (in case $L_1=0$) will be given below.

We show that all parameters $a_1,b_1,a_3,b_3 \in \R$ and $K>0$ in the ODE~\eqref{ode_K} for which
\begin{gather*}
\tr J=L_1=L_2=0 \\
\text{and } \det J>0
\end{gather*}
belong to one of the six cases in Table~\ref{table}.

To begin with, $L_1=0$ implies either
\begin{enumerate}
\item[(a)] $b_3=0$,
\item[(b)] $b_1=\frac{a_3 (1 - b_3) K}{D}$, where $D=1 + a_3 - a_3 K - b_3 K$ and $D\neq0$, or
\item[(c)] $D=0$ and $b_3 = 1$.
In this case, $(1+a_3)(1-K)=0$
and either
\begin{itemize}
\item[(c1)] $b_3=1$, $a_3=-1$  or
\item[(c2)] $b_3=1$, $K=1$.
\end{itemize}
\end{enumerate}

In case~(a), where $b_3=0$ (and hence $a_1=0$),
we find $\det J = K a_3 b_1$. 
Hence, the situation is covered by case~(i) in Table~\ref{table}.

In case~(b), where $D \neq 0$,
we find $b_3 \neq 1$ (otherwise $b_1=0$) 
and, using the Maple program in \cite{Kuznetsova:2012},
\[
L_2 = \frac{\pi}{288} \, \frac{(a_3 + b_3)^2 b_3 (1 + a_3 - b_3 K) (1-b_3 K) (1-K) (1 + a_3 + K - b_3 K)}{\sqrt{\det J}D(1-b_3)} .
\]
Now, $L_2=0$ implies that at least one of six factors is zero.
The first subcase $a_3+b_3=0$ implies $D=1-b_3$ and hence $b_1=a_3 K$ and $\det J = 0$.
As shown above, the subcase $b_3=0$ is covered by case~(i) in Table~\ref{table}.
The subcase $1 + a_3 - b_3 K=0$ implies $D=-a_3K$ and hence $b_1=b_3-1$.
That is, $a_1=a_3+1$, $b_3=b_1+1$, and hence $\det J=K(1-a_1-b_3)$ which corresponds to case~(ii).
The subcase $b_3 K=1$ (and hence $a_1=1$) implies $D=a_3(1-K)$ and hence $b_1=-1$.
Now, $\det J=-K(a_3+b_3)$, and the situation is covered by case~(iv).
The subcase $K=1$ (and hence $a_1=b_3$) implies $D=1-b_3$ and hence $b_1=a_3$.
Now, $\det J=b_1^2-a_1^2$, and the situation is covered by case~(r1).
Finally, the subcase $1 + a_3 + K - b_3 K=0$ implies $D=-(1+a_3)K$ and hence $b_1=\frac{a_3(1-b_3)}{-(1+a_3)}=\frac{a_3}{K}$.
That is, $a_1=Kb_3$, $a_3=Kb_1$ and hence $K=\frac{1}{b_3-b_1-1}$, $\det J=K^2(b_1^2-b_3^2)$ which corresponds to case~(r2).

In case (c1), where $b_3=1$ and $a_3=-1$ (and hence $a_1=K$), we find $\det J=-K(a_1+b_1)$. 
Hence, the situation is covered by case~(iii) in Table~\ref{table}.
In case (c2), where $b_3=1$ and $K=1$,
we find
\begin{equation} \label{eq:L2}
L_2 = \frac{\pi}{288} \, \frac{a_3 (1 + a_3) (1 + b_1) (a_3-b_1)}{\sqrt{\det J} \, b_1} .
\end{equation}
Now, $L_2=0$ implies that at least one of four factors is zero.
The first subcase $a_3=0$ implies $\det J<0$.
As shown above, the subcase $a_3=-1$ is covered by case~(iii).
Finally, the subcase $b_1=-1$ is covered by case~(iv),
and the subcase $a_3=b_1$ is covered by case~(r1).


\subsection{Reversible systems} \label{ss:rs}

Let $R: \R^2 \to \R^2$ be a reflection along a line.
A vector field $F \colon \R^2 \to \R^2$ (and the resulting dynamical system) is called reversible w.r.t.\ $R$ if
\[
F\circ R = - R \circ F .
\]
It is easy to see that, for any function $f \colon \R^2 \to \R$, the system 
\begin{align} \label{eq:rev}
\dot x &= f(x,y) \\
\dot y &= - f(y,x) \nonumber
\end{align}
is reversible w.r.t.\ the reflection $R \colon (x,y) \mapsto (y,x)$.
The following is a well-known fact, see e.g.\ \cite[4.6571]{nemytskii:stepanov:1960} or, more generally, \cite[Theorem~8.1]{devaney:1976}.

{\it An equilibrium of a reversible system which has purely imaginary eigenvalues and lies on the symmetry line of $R$ is a center.} 

Now we are in a position to deal with the last two cases in Table~\ref{table}.

Case (r1):
\begin{align*}
\dot x &= x^{a_1}y^{b_1}-1 ,\\
\dot y &= 1-x^{b_1}y^{a_1} .
\end{align*}                 
This vector field is of the form \eqref{eq:rev}, and hence it is reversible.

Case (r2):
\begin{align*}
\dot x &= x^{K b_3}y^{b_1}-1 , \\
\dot y &= K(1-x^{K b_1}y^{b_3}) ,
\end{align*}
where $K = \frac{1}{b_3-b_1-1}$.
We apply the coordinate transformation $u = x^K, v = y^{-1}$ and obtain
\begin{align*}
\dot u &= Kx^{K-1} ( x^{K b_3}y^{b_1}-1) = K u^{1-\frac{1}{K}} (u^{b_3}  v^{-b_1} - 1) , \\
\dot v &= - K y^{-2} (1-x^{K b_1}y^{b_3}) =- K v^2 (1-u^{b_1}v^{-b_3}) .
\end{align*}
Finally, we multiply the vector field with the positive function $K^{-1} v^{b_1}$ and obtain
\begin{align*}
\dot u &= u^{1-\frac1{K}+ b_3}   - u^{1-\frac1{K}}v^{b_1} , \\
\dot v &= -  v^{2 +b_1} +u^{b_1}v^{2+b_1-b_3} .
\end{align*}
Since $1-\frac{1}K = 2+b_1-b_3$, this vector field is of the form \eqref{eq:rev}, and hence it is reversible.                  

Since (r1) and (r2) lead to centers, analytic first integrals must exist. However, it seems difficult to find them.
So far we succeeded only in the intersection of (r1) and (r2), that is,
the case where $a_1 = b_3 = b_1 + 2$, $a_3=b_1$, $K=1$, and $b_1<-1$ (a one parameter family).
See Table~\ref{table:fi}.

\begin{table} \label{table:fi}
\begin{center}
\begin{tabular}{|c|c|c|c|}
\hline
case & ODE & first integral $V(x,y)$ & i.f.~$h(x,y)$ \\[2ex]
\hline\hline
(i) 
& $\begin{aligned} \dot x &= y^{b_1}-1 \\ \dot y &= K(1-x^{a_3}) \end{aligned}$
& $(x-\frac{1}{a_3+1}x^{a_3+1}) + \frac{1}{K}(y-\frac{1}{b_1+1}y^{b_1+1})$
& $1$ \\[2ex] \hline
(ii)
& $\begin{aligned} \dot x &= x^{a_1}y^{b_3-1}-1 \\ \dot y &= \tfrac{a_1}{b_3} (1-x^{a_1-1}y^{b_3}) \end{aligned}$ 
& $a_1 x+b_3 y - x^{a_1}y^{b_3}$
& $1$ 
\\[2ex] \hline
(iii) 
& $\begin{aligned} \dot x &= x^{a_1}y^{b_1}-1 \\ \dot y &= a_1 (1-\tfrac{y}{x}) \end{aligned}$ 
& $- \frac{a_1}{a_1-1}x^{-a_1+1}-\frac{1}{b_1+1}y^{b_1+1}+x^{-a_1}y$
& $x^{-a_1}$ \\[2ex] \hline
(iv) 
& $\begin{aligned} \dot x &= \tfrac{x}{y}-1 \\ \dot y &= \tfrac{1}{b_3}(1-x^{a_3}y^{b_3}) \end{aligned}$ 
& $- \frac{1}{a_3+1}x^{a_3+1}-\frac{b_3}{b_3-1}y^{-b_3+1}+xy^{-b_3} $
& $y^{-b_3}$ \\[2ex] \hline \hline
(r1)$\cap$(r2)
& $\begin{aligned} \dot x &= x^{b_1+2}y^{b_1}-1 \\ \dot y &= 1-x^{b_1}y^{b_1+2} \end{aligned}$
& $(\frac{1}{x}+\frac{1}{y})^{-(b_1+1)} (1 + (xy)^{-(b_1+1)})$
& $(x+y)^{-(b_1+2)}$ \\[2ex] \hline
\end{tabular}
\vspace{4ex}
\caption{Special cases of the ODE~\eqref{ode_K} with $K>0$ having a center 
and the corresponding first integrals and integrating factors (i.f.).
If $\al$ is zero in $\frac{x^\al}{\al}$ (in a first integral), replace $\frac{x^\al}{\al}$ by $\ln x$.
}
\end{center}
\end{table}


\subsection{Limit cycles}

As a simple consequence of our characterization of the center variety,
we can construct systems with two limit cycles via a degenerate Hopf or Bautin bifurcation,
see~\cite[Section~8.3]{kuznetsov:2004}.
We pick a system with $\tr J=L_1=0$ and $L_2 \neq 0$,
in particular, we consider case (c2) of our case distinction:
we take $b_3 =  a_1 = 1$, $K=1$ and hence $\tr J=L_1=0$
and choose $b_1$ and $a_3$ such that $L_2 <0$
with $L_2$ given by Equation~\eqref{eq:L2};
for example, $b_1 = -2$, $a_3 = -3$ or $b_1 = 2, a_3 = 1$.
If we now slightly perturb $K$ (keeping $a_1=K$) such that $L_1 > 0$ (and $\tr J=0$),
the resulting system has a stable limit cycle.
Finally, if we slightly change $a_1$ such that $\tr J< 0$, we create a small unstable limit cycle
via a subcritical Hopf bifurcation.

It remains open, if the ODE~\eqref{ode_K} admits more than two limit cycles.
For a computational algebra approach to this question, see~\cite{romanovski:shafer:2009}.




\section*{Acknowledgments}
BB and SM were supported by the Austrian Science Fund (FWF), project P28406.
GR was supported by the FWF, project P27229.

\section*{Supplementary material}
We provide a Maple worksheet
containing (i) the program from \cite{Kuznetsova:2012}
for the computation of the first two focal values
and (ii) the case distinction described in Section~\ref{ss:cd}.
Further we provide a CDF file (created with Mathematica)
containing 3-dimensional visualizations of the center variety.
(Thereby, we start from the 5 parameters $a_1$, $b_1$, $a_3$, $b_3$, and $K$,
use $\tr J=0$, that is, $a_1=K b_3$, and fix $K$.
As a result, we obtain plots in the 3 parameters $a_1$, $b_1$, and $a_3$.)

\bibliographystyle{abbrv}
\bibliography{stability}

\end{document}